\newtheorem{thm}{Theorem}[section]
\newtheorem{lem}[thm]{Lemma}
\newtheorem{cor}[thm]{Corollary}
\newtheorem{pro}[thm]{Proposition}
\newtheorem{defn}[thm]{Definition}
\newtheorem{rem}[thm]{Remark}
\title{The Chow ring of a sequence of point blow-ups}
\author{Daniel Camaz\'on \footnote{The author was partially supported by PGC2018-096446-B-C21}}
\date{}
\begin{document}
\maketitle

\begin{abstract}
Given a sequence of point blow-ups of smooth $n-$dimensional projective varieties $Z_{i}$ defined over an algebraically closed field $\mathit{k}$, $Z_{s}\xrightarrow{\pi_{s}} Z_{s-1}\xrightarrow{\pi_{s-1}}\cdot\cdot\cdot\xrightarrow{\pi_{2}} Z_{1}\xrightarrow{\pi_{1}} Z_{0}$, we give two presentations of the Chow ring of its sky $A^{\bullet}(Z_{s})$. The first one using the classes of the total transforms of the exceptional components as generators and the second one using the classes of the strict transforms ones. We prove that the skies of two sequences of point blow-ups of the same length have isomorphic Chow rings. Finally we give a characterization of final divisor of a sequence of point blow-ups in terms of some relations defined over the Chow group of zero-cycles of its sky $A_{0}(Z_{s})$. 
\end{abstract}

\section{Introduction}

Sequences of blow-ups of smooth varieties along smooth centers are useful for general algebraic geometric purposes, in particular, for resolution and classification of singularities. This paper is devoted to explicitly compute the Chow ring of the variety obtained after a sequence of point blow-ups. We will assume, additionally, that the center of each blow up has normal crossings with the already created exceptional divisors by the precedent blow ups. In this case, the Chow ring is generated as a $\mathbb{Z}-$algebra by the exceptional divisors and the generic hyperplane.
 
The composition of the successive blow ups of such sequences is a projective (and therefore proper) birational morphisms $Z_{s}\rightarrow Z_{0}$, where $Z_{s}$ and $Z_{0}$ are smooth algebraic varieties which are respectively called sky and ground. In this paper, we restrict to the case of point blow-ups and $Z_{0}\cong\mathbb{P}^{n}$, as it is involved in many geometric contexts, in particular, the study of algebraic curves and surfaces. 

Our main goal is to give an explicit presentation of the Chow ring of the sky of a sequence of point blow-ups as a finite type $\mathbb{Z}-$algebra. In \cite{Keel92} Keel gives a presentation of the Chow ring of a blow-up $\pi: Z_{1}\rightarrow Z_{0}$ with center $C_{1}$ a regularly embedded subvariety under the assumption that the restriction map $i_{1}^{*}: A^{\bullet}(Z_{0})\rightarrow A^{\bullet}(C_{1})$ is surjective, and no more results have been found in the literature. 

We give two presentations of the Chow ring of the sky of a sequence of point blow-ups $A^{\bullet}(Z_{s})$ by considering the total transforms and the strict transforms of the irreducible components of the exceptional divisor as generators respectively. We give polynomial generators of the ideal of relations for the total (respectively strict) exceptional components. As a corollary, we prove a surprising result: the Chow rings of the skies of two sequences of point blow-ups of the same length are isomorphic. 

An irreducible exceptional component $E$ is called final if there are not blow-ups with center lying over the point producing $E$ for the first time, see \cite{CamazonEncinas22} and Definition \ref{DefFD}. The presentation of the Chow ring in terms of the strict transform of exceptional divisors, allow us to characterize final components by polynomial conditions. This improves conditions given in \cite{CamazonEncinas22}.
\section{Preliminaries}

Fix an algebraically closed field $\mathit{k}$. Throughout this paper a variety will mean a reduced projective scheme over $\mathit{k}$.

\begin{defn}\label{DefSBU}
A sequence of blow-ups over $\mathit{k}$ is defined as a sequence of morphisms
\begin{equation*}
Z_{s}\xrightarrow{\pi_{s}} Z_{s-1}\xrightarrow{\pi_{s-1}}\cdot\cdot\cdot\xrightarrow{\pi_{2}} Z_{1}\xrightarrow{\pi_{1}} Z_{0},
\end{equation*}
where $Z_{0}$ is a smooth $n-$dimensional projective variety and
\begin{enumerate}
\item the morphism $\pi_{i}: Z_{i}\rightarrow Z_{i-1}$ is the blow-up at center $C_{i}\subset Z_{i-1}$ for $i=1,...,s$. We denote by $E_{i}^{i}$ to be the exceptional hypersurface of $\pi_{i}$, and for $j<i$ $E_{j}^{i}\subset Z_{i}$ the strcit transform of $E_{j}^{j}\subset Z_{j}$ in $Z_{i}$,
\item $codim(C_{i+1}, Z_{i})\geq 2$ for $i=0,...,s-1$,
\item the center $C_{i}$ has simple normal crossings with $\{E_{1}^{i-1}, E_{2}^{i},...,E_{i-1}^{i-1}\}$, for $i=1,...,s$.
\end{enumerate}
We will refer to $Z_{0}$ and $Z_{s}$ as the ground and the sky of the sequence of blow-ups respectively.
\end{defn}

Along the paper we fix a sequence of blow ups $(Z_{0},...,Z_{s},\pi)$ as in Definition \ref{DefSBU} and we set $\pi_{j,i}: Z_{j}\rightarrow Z_{i}$, for $j>i$, to be the composition $\pi_{j,i}=\pi_{i+1}\circ\pi_{i+2}\circ...\circ\pi_{j-1}\circ\pi_{j}$. \\

The centers $C_{i}$, in general, can have any dimension. We extend the well-known notion of proximity for point blow-ups.

\begin{defn}
We say that $C_{j}$ is proximate to $C_{i}$, and write $C_{j}\rightarrow C_{i}$ if and only if $C_{j}\subset E_{i}^{j-1}$.
\end{defn}

Note that, if $C_{j}$ is proximate to $C_{i}$ then $j>i$.

\begin{rem}
For $j>i$ we denote by $E_{i}^{j*}$ the total transform of $E_{i}^{i}$ by the morphism $\pi_{j,i}: Z_{j}\rightarrow Z_{i}$. By an abuse of notation $E_{i}^{i*}=E_{i}^{i}$. Note that by definition of the total transform, one has
\begin{equation*}
E_{i}^{k*}=E_{i}^{k}+\sum_{j>i} p_{ij}E_{j}^{k*}
\end{equation*}
where $p_{ij}=1$ if $i<j\leq k$ and $C_{j}$ is proximate to $C_{i}$ and $p_{ij}=0$ in any other case. \\
We can now define two free $\mathbb{Z}-$modules with basis $\left\{E_{i}^{k*}\right\}_{i=1}^{k}$ and $\left\{E_{i}^{k}\right\}_{i=1}^{k}$ respectively, and construct the change of basis matrix $B_{k}$
\begin{equation}
B_{k}=\begin{pmatrix}
1 & 0 & \cdots & 0 & 0 \\
-p_{12} & 1 & \ddots & \vdots & \vdots \\
\vdots & -p_{23} & \ddots & \vdots & \vdots \\
\vdots & \vdots & \ddots & 1 & \vdots \\
-p_{1k} & -p_{2k} & \cdots & -p_{k-1k} & 1
\end{pmatrix}
\end{equation}
Note that its inverse $B_{k}^{-1}$ has also integer entries.
\end{rem}

Consider just one of the blow-ups conforming the sequence:
\begin{equation*}
\xymatrix{
E_{\alpha+1}^{\alpha+1}\ar[r]^{j_{\alpha+1}}\ar[d]_{g_{\alpha+1}} & Z_{\alpha+1}\ar[d]_{\pi_{\alpha+1}}\\
C_{\alpha+1}\ar[r]^{i_{\alpha+1}} & Z_{\alpha}
}
\end{equation*}
for $\alpha\in\left\{0,1,...,s-1\right\}$.

The following result show us the multiplication rules of $A^{\bullet}(Z_{\alpha+1})$ for any blow-up at some smooth center $C_{\alpha+1}$.

\begin{pro}\cite[Proposition 13.12.]{EisenbudHarris16}\label{ProGCRBU}
The Chow ring $A^{\bullet}(Z_{\alpha+1})$ is generated by $\pi_{\alpha+1}^{*}A^{\bullet}(Z_{\alpha})$ and $j_{\alpha+1*}A^{\bullet}(E_{\alpha+1}^{\alpha+1})$ as an algebra, that is, by classes pulled back from $Z_{\alpha}$ and classes supported on $E_{\alpha+1}^{\alpha+1}$. The rules for multiplication are the following:

\begin{numcases}
\pi_{\alpha+1}^{*}x\cdot\pi_{\alpha+1}^{*}y=\pi_{\alpha+1}^{*}(x\cdot y) & for\enspace $x,y\in A^{\bullet}(Z_{\alpha})$ \label{intpb} \\
\pi_{\alpha+1}^{*}x\cdot j_{\alpha+1*}t=j_{\alpha+1*}(t\cdot g_{\alpha+1}^{*}i_{\alpha+1}^{*}x) & for\enspace $x\in A^{\bullet}(Z_{\alpha}), t\in A^{\bullet}(E_{\alpha+1}^{\alpha+1})$ \label{intpbed} \\
j_{\alpha+1*}t\cdot j_{\alpha+1*}u=-j_{\alpha+1*}(t\cdot u\cdot\varsigma) & for\enspace $t,u\in A^{\bullet}(E_{\alpha+1}^{\alpha+1})$ \label{inted}
\end{numcases}

where $\varsigma=c_{1}(\mathcal{O}_{E_{\alpha+1}^{\alpha+1}}(1))$. \\
We identify $A^{\bullet}(Z_{\alpha})$ as a subring of $A^{\bullet}(Z_{\alpha+1})$ by means of the ring homomorphism
\begin{equation*} 
\pi_{\alpha+1}^{*}: A^{\bullet}(Z_{\alpha})\rightarrow A^{\bullet}(Z_{\alpha+1}),
\end{equation*}
\end{pro}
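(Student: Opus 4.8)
The plan is to treat the generation statement and the three multiplication rules separately, since the former requires the global structure of the blow-up while the latter are formal consequences of the projection and self-intersection formulas. Throughout I abbreviate $Z = Z_{\alpha}$, $\tilde Z = Z_{\alpha+1}$, $C = C_{\alpha+1}$, $E = E_{\alpha+1}^{\alpha+1}$, $\pi = \pi_{\alpha+1}$, $j = j_{\alpha+1}$, $g = g_{\alpha+1}$, $i = i_{\alpha+1}$, and I record the two structural facts I will lean on: the displayed square commutes, i.e.\ $\pi\circ j = i\circ g$; and, since $E = \mathbb{P}(N_{C/Z})$ is the projectivized normal bundle, its normal bundle in $\tilde Z$ is the tautological line bundle, $N_{E/\tilde Z} = \mathcal{O}_{\tilde Z}(E)|_{E}\cong\mathcal{O}_{E}(-1)$, so that $c_{1}(N_{E/\tilde Z}) = -\varsigma$.

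For generation I would argue by excision. Since $\pi$ restricts to an isomorphism $\tilde Z\setminus E \xrightarrow{\sim} Z\setminus C$, the right-exact excision sequences read
\begin{equation*}
A_{\bullet}(E) \xrightarrow{j_{*}} A_{\bullet}(\tilde Z) \xrightarrow{\rho} A_{\bullet}(Z\setminus C)\to 0, \qquad A_{\bullet}(C)\to A_{\bullet}(Z)\xrightarrow{\rho'} A_{\bullet}(Z\setminus C)\to 0,
\end{equation*}
where $\rho,\rho'$ are the restriction maps. Given $\gamma\in A_{\bullet}(\tilde Z)$, lift $\rho(\gamma)$ to some $\delta\in A_{\bullet}(Z)$ via $\rho'$; then $\gamma - \pi^{*}\delta$ restricts to $0$ on $\tilde Z\setminus E$ and hence lies in the image of $j_{*}$. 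Thus $A_{\bullet}(\tilde Z) = \pi^{*}A_{\bullet}(Z) + j_{*}A_{\bullet}(E)$ already as a group, which is stronger than the asserted generation as an algebra. Injectivity of $\pi^{*}$, so that $A^{\bullet}(Z)$ really is a subring, follows from $\pi_{*}\pi^{*} = \mathrm{id}$, a consequence of $\pi_{*}[\tilde Z]=[Z]$ and the projection formula.

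The multiplication rules are then immediate. Rule \eqref{intpb} is just the fact that $\pi^{*}$ is a ring homomorphism for the intersection product on smooth varieties. For \eqref{intpbed}, the projection formula gives $\pi^{*}x\cdot j_{*}t = j_{*}\bigl(t\cdot j^{*}\pi^{*}x\bigr)$, and commutativity of the square rewrites $j^{*}\pi^{*}x = (\pi j)^{*}x = (ig)^{*}x = g^{*}i^{*}x$. For \eqref{inted}, the projection formula gives $j_{*}t\cdot j_{*}u = j_{*}\bigl(t\cdot j^{*}j_{*}u\bigr)$, and the self-intersection formula $j^{*}j_{*}u = c_{1}(N_{E/\tilde Z})\cdot u = -\varsigma\cdot u$ turns this into $-j_{*}(t\cdot u\cdot\varsigma)$.

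The only genuinely non-formal inputs are the excision step for generation and the identification $N_{E/\tilde Z}\cong\mathcal{O}_{E}(-1)$ feeding the self-intersection formula. I expect the generation statement to be the main obstacle, since it is where the global geometry of the blow-up (the isomorphism away from the center together with the right-exactness of excision) genuinely enters; once the normal bundle is computed, the three product formulas are one-line applications of standard intersection theory.
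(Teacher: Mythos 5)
Your argument is correct and is essentially the standard proof of this result (excision/localization for generation, plus the projection formula and the self-intersection formula with $N_{E/\tilde Z}\cong\mathcal{O}_{E}(-1)$ for the three product rules). The paper itself offers no proof here --- it simply cites \cite[Proposition 13.12]{EisenbudHarris16} --- and your sketch reproduces the argument given in that reference, so there is nothing to reconcile.
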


\begin{rem}
We will denote by $e_{i}^{\alpha*}$ (resp. $e_{i}^{\alpha}$) to be the class of $\left[E_{i}^{\alpha*}\right]$ (resp. $\left[E_{i}^{\alpha}\right]$) in $A^{1}(Z_{\alpha})$ for $i=1,...,\alpha$.
\end{rem}

In the next section we will give a presentation of the Chow ring $A^{\bullet}(Z_{\alpha})$ as a $\mathbb{Z}-$algebra of finite type.

\section{Main results}

Note that Proposition \ref{ProGCRBU} does not give a presentation of $A^{\bullet}(Z_{\alpha+1})$ as a $A^{\bullet}(Z_{\alpha})-$algebra, but only states the rules of multiplication. \\
If we could find generators of $A^{\bullet}(E_{\alpha+1}^{\alpha+1})$ as a $\mathbb{Z}-$algebra, $\left\{\gamma_{1},...,\gamma_{r}\right\}\in A^{\bullet}(E_{\alpha+1}^{\alpha+1})$, then 
\begin{equation*}
A^{\bullet}(Z_{\alpha+1})\cong A^{\bullet}(Z_{\alpha})\left[j_{\alpha+1*}(\gamma_{1}),...,j_{\alpha+1*}(\gamma_{r})\right]
\end{equation*}
would be a $A^{\bullet}(Z_{\alpha})-$algebra of finite type. One would like to have a presentation 
\begin{equation*}
A^{\bullet}(Z_{\alpha+1})\cong A^{\bullet}(Z_{\alpha})\left[w_{1},...,w_{r}\right]/\mathcal{J}
\end{equation*}
by sending $w_{i}$ to $j_{\alpha+1*}(\gamma_{i})$, with an explicit description of the ideal $\mathcal{J}$. The ideal $\mathcal{J}$will be computed in Theorems \ref{ThmCRTTPBU} and \ref{ThmCRPBUST}.
We will restrict ourselves to the case of sequences of point blow-ups, that is $C_{\alpha}=P_{\alpha}$, with the ground variety $Z_{0}\cong\mathbb{P}^{n}$. By \cite[Theorem 2.1.]{EisenbudHarris16}, $A^{\bullet}(Z_{0})\cong\mathbb{Z}\left[u\right]/(u^{n+1})$, by sending $u$ to $h$, where $h\in A^{1}(Z_{0})$ is the rational equivalence class of any hyperplane $\left[H\right]$ in $\mathbb{P}^{n}$, and $\forall\alpha$ $A^{\bullet}(E_{\alpha}^{\alpha})\cong\mathbb{Z}\left[w\right]/(w^{n})$ by sending $w$ to $\varsigma_{\alpha}$, with $\varsigma_{\alpha}\in A^{1}(E_{\alpha}^{\alpha})$ is the rational class of any hyperplane.

In the case of sequences of point blow-ups, we are able to give generators of the Chow ring $A^{\bullet}(Z_{s})$ as a $\mathbb{Z}-$algebra.

\begin{lem}\label{LemGCRPBU}
The Chow ring of the sky $A^{\bullet}(Z_{s})$ is generated by $\left\{h^{s*}, \left\{e_{i}^{s*}\right\}_{i=1}^{s}\right\}$ as a $\mathbb{Z}-$algebra.
\end{lem}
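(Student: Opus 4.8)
The plan is to proceed by induction on $s$, using Proposition \ref{ProGCRBU} as the inductive engine. The base case is $Z_0 \cong \mathbb{P}^n$, where $A^\bullet(Z_0) \cong \mathbb{Z}[u]/(u^{n+1})$ is generated by $h = h^{0*}$ as a $\mathbb{Z}$-algebra, so the claim holds trivially. For the inductive step I would assume that $A^\bullet(Z_{\alpha})$ is generated as a $\mathbb{Z}$-algebra by $\{h^{\alpha*}, (e_i^{\alpha*})_{i=1}^{\alpha}\}$ and show the analogous statement for $Z_{\alpha+1}$.

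**First I would unpack Proposition \ref{ProGCRBU}**, which tells us that $A^\bullet(Z_{\alpha+1})$ is generated as a ring by $\pi_{\alpha+1}^* A^\bullet(Z_\alpha)$ together with the classes pushed forward from the exceptional divisor, $j_{\alpha+1*} A^\bullet(E_{\alpha+1}^{\alpha+1})$. By the inductive hypothesis, the first piece is generated by the pullbacks $\pi_{\alpha+1}^* h^{\alpha*} = h^{(\alpha+1)*}$ and $\pi_{\alpha+1}^* e_i^{\alpha*} = e_i^{(\alpha+1)*}$ for $i = 1, \dots, \alpha$ (these pullbacks of total transforms are exactly the total transforms at the next stage). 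For the second piece, since we are in the point blow-up case we have $E_{\alpha+1}^{\alpha+1} \cong \mathbb{P}^{n-1}$, so $A^\bullet(E_{\alpha+1}^{\alpha+1}) \cong \mathbb{Z}[w]/(w^n)$ is generated as a $\mathbb{Z}$-algebra by the single hyperplane class $\varsigma_{\alpha+1}$. Hence $j_{\alpha+1*} A^\bullet(E_{\alpha+1}^{\alpha+1})$ is generated, as a module over $\pi_{\alpha+1}^* A^\bullet(Z_\alpha)$, by the powers $j_{\alpha+1*}(\varsigma_{\alpha+1}^k)$.

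**The key step is then to identify these exceptional classes with the new generator $e_{\alpha+1}^{(\alpha+1)*}$ and its powers.** The class $e_{\alpha+1}^{(\alpha+1)*} = [E_{\alpha+1}^{\alpha+1}]$ equals $j_{\alpha+1*}(1)$, the pushforward of the fundamental class. The crucial computation, via the self-intersection rule \eqref{inted}, is that powers of $e_{\alpha+1}^{(\alpha+1)*}$ recover all the classes $j_{\alpha+1*}(\varsigma_{\alpha+1}^k)$: iterating \eqref{inted} gives $(e_{\alpha+1}^{(\alpha+1)*})^{k+1} = (j_{\alpha+1*}(1))^{k+1} = \pm\, j_{\alpha+1*}(\varsigma_{\alpha+1}^{k})$ up to sign, so that each module generator $j_{\alpha+1*}(\varsigma_{\alpha+1}^k)$ lies in the subalgebra generated by $e_{\alpha+1}^{(\alpha+1)*}$. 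Since $e_{\alpha+1}^{(\alpha+1)*} = e_{\alpha+1}^{(\alpha+1)*}$ is itself one of the required generators, this shows that $\{h^{(\alpha+1)*}, (e_i^{(\alpha+1)*})_{i=1}^{\alpha+1}\}$ generates $A^\bullet(Z_{\alpha+1})$ as a $\mathbb{Z}$-algebra, closing the induction at $\alpha+1 = s$.

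**The main obstacle** I anticipate is the bookkeeping in the self-intersection computation: one must track carefully how the repeated application of rule \eqref{inted} produces the signs and the factors $\varsigma_{\alpha+1}$, and verify that $g_{\alpha+1}^* i_{\alpha+1}^* $ behaves correctly on the point center so that no extra classes from $A^\bullet(Z_\alpha)$ are needed. Concretely, one should check that $j_{\alpha+1*}(\varsigma_{\alpha+1}^k)$ for $0 \le k \le n-1$ are genuinely expressible as integer-coefficient polynomials in $e_{\alpha+1}^{(\alpha+1)*}$ alone, which hinges on the fact that $\varsigma_{\alpha+1}$ generates $A^\bullet(E_{\alpha+1}^{\alpha+1})$ as a $\mathbb{Z}$-algebra — a point-blow-up phenomenon that would fail for higher-dimensional centers. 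Once this is in hand, the rest is a routine assembly of the module and algebra generators.
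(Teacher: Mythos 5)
Your proposal is correct and follows essentially the same route as the paper's own proof: induction on the length of the sequence, with the base case $A^{\bullet}(Z_{0})\cong\mathbb{Z}[u]/(u^{n+1})$, the inductive step powered by Proposition \ref{ProGCRBU}, and the identification of $j_{\alpha+1*}(\varsigma_{\alpha+1}^{k})$ with (signed) powers of $e_{\alpha+1}^{\alpha+1*}$ via the self-intersection rule \eqref{inted}, using that $E_{\alpha+1}^{\alpha+1}\cong\mathbb{P}^{n-1}$ so its Chow ring is generated by the single class $\varsigma_{\alpha+1}$. Your version is in fact slightly more explicit than the paper's about why the pullbacks of the old generators are the new total transforms and why the powers of the exceptional class exhaust $j_{\alpha+1*}A^{\bullet}(E_{\alpha+1}^{\alpha+1})$, but the argument is the same.
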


\begin{proof}
This follows by induction on $\alpha$. It is clear that $A^{\bullet}(Z_{0})$ is generated by $\left\{h\right\}$. Let us suppose that $A^{\bullet}(Z_{\alpha})$ is generated by $\left\{h^{\alpha*},\left\{e_{i}^{\alpha*}\right\}_{i=1}^{\alpha}\right\}$. Now by Proposition \ref{ProGCRBU} and due to the fact that $E_{\alpha+1}^{\alpha+1}\cong\mathbb{P}^{n-1}$, that is $A^{\bullet}(E_{\alpha+1}^{\alpha+1})\cong\mathbb{Z}\left[t\right]/(t^{n})$, by sending $t$ to $\varsigma_{\alpha+1}$, with $\varsigma_{\alpha+1}$ the rational equivalence class of any hyperplane in $\mathbb{P}^{n-1}$, and $e_{\alpha+1}^{\alpha+1*}\cdot e_{\alpha+1}^{\alpha+1*}=-j_{\alpha+1*}(\varsigma_{\alpha+1})$ by equation (4)  then $A^{\bullet}(Z_{\alpha+1})$ is generated by $\left\{h^{\alpha+1*},\left\{e_{i}^{\alpha+1*}\right\}_{i=1}^{\alpha+1}\right\}$ as a $\mathbb{Z}-$algebra.
\end{proof}

\begin{rem}
It makes sense then to define the augmented free $\mathbb{Z}-$modules with basis $\left\{H^{k*},\left\{E_{i}^{k*}\right\}_{i=1}^{k}\right\}$ and $\left\{\widetilde{H}^{k},\left\{E_{i}^{k}\right\}_{i=1}^{k}\right\}$ and the augmented change of basis matrix $B_{k}^{*}$
\begin{equation}
B_{k}^{*}=\begin{pmatrix}
1 &       0 & \cdots & \cdots & 0 & 0 \\
0 & 1 & 0 & \cdots & \vdots & \vdots \\
0 &-p_{12} & 1 & \ddots & \vdots & \vdots \\
\vdots & -p_{13} & -p_{23} & \ddots & \vdots & \vdots \\
\vdots & \vdots & \vdots & \ddots & 1 & \vdots \\
0 & -p_{1k} & -p_{2k} & \cdots & -p_{k-1k} & 1
\end{pmatrix}
\end{equation}
and its inverse $B_{k}^{*-1}$.
\end{rem}

\begin{thm}\label{ThmCRTTPBU}
The Chow ring of the sky $A^{\bullet}(Z_{s})$ is isomorphic to
\begin{equation}\label{EqCRTTPBU}
 A^{\bullet}(Z_{s})\cong\mathbb{Z}\left[x_{0},x_{1},...,x_{s}\right]/(\left\{x_{i}\cdot x_{j}\right\}_{\substack{i,j=0 \\ i\neq j}}^{s},\left\{(-1)^{n}(x_{i})^{n}+(x_{0})^{n}\right\}_{i=1}^{s}),
\end{equation}
by sending $x_{0}$ to the class $h^{s*}$ and $x_{i}$ to the class $e_{i}^{s*}$ for $i=1,...,s$.
\end{thm}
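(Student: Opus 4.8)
The plan is to use Lemma \ref{LemGCRPBU} to produce a surjection onto $A^\bullet(Z_s)$ and then prove that the listed relations cut out its kernel by a degreewise rank count. By Lemma \ref{LemGCRPBU} the assignment $x_0\mapsto h^{s*}$, $x_i\mapsto e_i^{s*}$ extends to a surjective graded ring homomorphism $\phi\colon \mathbb{Z}[x_0,\dots,x_s]\to A^\bullet(Z_s)$. First I would check that every generator of the ideal in \eqref{EqCRTTPBU} lies in $\ker\phi$, so that $\phi$ descends to a surjection $\bar\phi\colon R\to A^\bullet(Z_s)$, where $R$ denotes the right-hand side of \eqref{EqCRTTPBU}. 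Then I would compute the graded $\mathbb{Z}$-ranks of $R$ and of $A^\bullet(Z_s)$, show they are free and agree in every degree, and conclude that a degreewise surjection of free $\mathbb{Z}$-modules of equal finite rank is an isomorphism.

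To verify the mixed relations $x_ix_j\mapsto 0$ (for $i\neq j$, $0\le i,j\le s$), I would work on $Z_j$ with $i<j$, writing the two factors as $e_j^j=j_{j*}(1)$ and as a class $\pi_j^{*}y'$ pulled back from $Z_{j-1}$, where $y'=h^{(j-1)*}$ if $i=0$ and $y'=e_i^{(j-1)*}$ otherwise, so that $y'\in A^1(Z_{j-1})$. Rule \eqref{intpbed} then gives $\pi_j^{*}y'\cdot e_j^j=j_{j*}\bigl(g_j^{*}i_j^{*}y'\bigr)$, and since the center $C_j=P_j$ is a point the restriction $i_j^{*}y'$ lies in $A^1(\{P_j\})=0$; the product vanishes on $Z_j$, and applying $\pi_{s,j}^{*}$ yields $x_ix_j\mapsto 0$ on $Z_s$. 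For the top relation I would first establish by induction from rule \eqref{inted} that $(e_\alpha^{\alpha*})^{k}=(-1)^{k-1}j_{\alpha*}(\varsigma_\alpha^{\,k-1})$, whence $(e_i^{i*})^{n}=(-1)^{n-1}j_{i*}(\varsigma_i^{\,n-1})=(-1)^{n-1}[\mathrm{pt}]$ on $Z_i$. As $e_i^{s*}=\pi_{s,i}^{*}e_i^{i*}$ and $h^{s*}=\pi_{s,0}^{*}h$ are pullbacks and $\pi_{s,\bullet *}\pi_{s,\bullet}^{*}=\mathrm{id}$, the top self-intersection degrees are preserved: $\deg(e_i^{s*})^{n}=(-1)^{n-1}$ and $\deg(h^{s*})^{n}=\deg h^{n}=1$. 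Since $A^n(Z_s)=A_0(Z_s)\cong\mathbb{Z}$ via the degree map (from the rank count below), equality of degrees upgrades to $(e_i^{s*})^{n}=(-1)^{n-1}(h^{s*})^{n}$, which is exactly $(-1)^{n}x_i^{n}+x_0^{n}\mapsto 0$.

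For the rank count on $R$, the relations $x_ix_j=0$ ($i\neq j$) annihilate every monomial involving two distinct variables, so $R$ is spanned by $1$ together with the pure powers $x_i^{d}$; in degrees $1\le d\le n-1$ these give $s+1$ free generators, in degree $n$ the relations $(-1)^{n}x_i^{n}+x_0^{n}=0$ collapse the $s+1$ powers to the single free generator $x_0^{n}$, and in degrees $>n$ one checks $x_i^{n+1}=0$ for all $i$ using $x_0x_i=0$. Thus $R$ has graded ranks $1,s+1,\dots,s+1,1,0,\dots$ and is free. On the geometric side I would apply the blow-up formula for a point center (codimension $n$) iteratively: each point blow-up leaves $A^0$ and $A^n$ unchanged and adjoins one free $\mathbb{Z}$ summand to each of $A^1,\dots,A^{n-1}$, so starting from $A^\bullet(\mathbb{P}^n)=\mathbb{Z}^{\oplus(n+1)}$ one obtains the identical free graded ranks $1,s+1,\dots,s+1,1$; in particular $A_0(Z_s)\cong\mathbb{Z}$ as used above. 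Hence $\bar\phi$ is a degreewise surjection of free $\mathbb{Z}$-modules of equal finite rank, and is therefore an isomorphism.

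The step I expect to be the main obstacle is the top-degree analysis: one must be certain that $A_0(Z_s)\cong\mathbb{Z}$ with injective degree map, so that equality of intersection numbers genuinely forces equality of classes, and that the quotient $R$ develops no torsion when the $s+1$ top powers are identified to a single generator. Once the freeness and ranks supplied by the iterated blow-up formula are in place, what remains is the routine sign bookkeeping $(-1)^{n-1}=(-1)^{n+1}$ and the degreewise comparison, both of which are mechanical.
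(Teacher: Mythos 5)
Your proof is correct, and while it shares the paper's skeleton (the surjection $\phi$ supplied by Lemma \ref{LemGCRPBU}, followed by an identification of its kernel), both key verifications are carried out by genuinely different means. For the mixed relations the paper expands each total transform $e_{i}^{\alpha+1*}$ in the strict-transform basis and checks case by case, via the proximity relations, that $e_{i}^{\alpha+1*}\cdot e_{\alpha+1}^{\alpha+1*}=0$, invoking the moving lemma separately for $h^{\alpha+1*}\cdot e_{\alpha+1}^{\alpha+1}$; you instead observe that $e_{i}^{j*}$ (and $h^{j*}$) is pulled back from $Z_{j-1}$ and that $i_{j}^{*}$ lands in $A^{\bullet}(\{P_{j}\})$, which kills everything in positive degree, so rule \eqref{intpbed} gives the vanishing in one line. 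This is cleaner, uniform in $i$ (it handles $i=0$ and $i\geq 1$ identically, with no proximity bookkeeping), and isolates exactly where the point-center hypothesis enters. For the reverse inclusion $Ker(\phi)\subset\mathcal{J}$ the paper argues element-wise: a homogeneous $P\in Ker(\phi)$ reduces modulo $\mathcal{J}$ to a sum of pure powers, whose coefficients are killed by multiplying up to degree $n$ and using $(e_{i}^{s*})^{n}\neq 0$; you instead compute the graded ranks of both sides (the quotient ring by inspection of the ideal, the Chow ring by iterating the blow-up exact sequence for a point center) and conclude because a surjection of free $\mathbb{Z}$-modules of equal finite rank is an isomorphism. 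Both routes ultimately rest on the same facts ($A^{n}(Z_{s})\cong\mathbb{Z}$ via the degree map, freeness of the Chow groups of $Z_{s}$), but your rank count makes the freeness explicit and reusable, at the cost of having to verify that the quotient ring acquires no torsion in degree $n$ and vanishes above degree $n$ --- the point you rightly flag as delicate, and which your analysis of $x_{i}^{n+1}$ and of the degree-$n$ identifications does settle.
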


\begin{proof}
By lemma \ref{LemGCRPBU} there exist a exists a surjective morphism
\begin{equation*}
\phi: \mathbb{Z}\left[x_{0},x_{1},...,x_{s}\right]\rightarrow A^{\bullet}(Z_{s}),
\end{equation*}
 such that $\phi(x_{0})=h^{s*}$ and $\phi(x_{i})=e_{i}^{s*}$ for $i=1,...s$. Firstly we will prove that \\ $\mathcal{J}:=\left\langle \left\{x_{i}\cdot x_{j}\right\}_{\substack{i,j=0 \\ i\neq j}}^{s},\left\{(-1)^{n}(x_{i})^{n}+(x_{0})^{n}\right\}_{i=1}^{s}\right\rangle\subset Ker(\phi)$. To begin with, let us express the classes of the basis $\left\{E_{i}^{\alpha+1*}\right\}_{i=1}^{\alpha+1}$ in terms of the classes of the basis $\left\{E_{i}^{\alpha+1}\right\}_{i=1}^{\alpha+1}$, that is, since 
\begin{equation*}
e_{i}^{\alpha*}=e_{i}^{\alpha}+\sum_{j=i+1}^{\alpha}b_{j,i}e_{j}^{\alpha},
\end{equation*}
then
\begin{equation*}
e_{i}^{\alpha+1*}=e_{i}^{\alpha+1}+\sum_{j=i+1}^{\alpha}b_{j,i}e_{j}^{\alpha+1}+(\sum_{j=i}^{\alpha} p_{j\alpha+1}b_{j,i})e_{\alpha+1}^{\alpha+1}
\end{equation*}
where $b_{j,i}$ denotes the coefficients of the augmented change of basis matrix $B_{\alpha}^{*-1}$. \\
If we denote by $\varsigma_{\alpha+1}\in A^{1}(E_{\alpha+1}^{\alpha+1})$ the class of any hyperplane in $E_{\alpha+1}^{\alpha+1}$ then we have the following intersection products
\begin{numcases}
e_{\alpha+1}^{\alpha+1}\cdot e_{\alpha+1}^{\alpha+1}=-j_{\alpha+1*}(\varsigma_{\alpha+1}) & \label{selfint} \\
e_{j}^{\alpha+1}\cdot e_{\alpha+1}^{\alpha+1}=j_{\alpha+1*}(\varsigma_{\alpha+1}) & if\enspace $P_{\alpha+1}\rightarrow P_{j}$ \label{prox} \\
e_{j}^{\alpha+1}\cdot e_{\alpha+1}^{\alpha+1}=0 & otherwise \label{noprox}
\end{numcases}

where equation (\ref{selfint}) follows from equation (\ref{inted}) and equation (\ref{prox}) is a direct consequence of \cite[Corollary 6.7.1]{Fulton98}, that is $\pi_{\alpha+1}^{*}(e_{j}^{\alpha})=e_{j}^{\alpha+1}+e_{\alpha+1}^{\alpha+1}$, and equations (\ref{intpbed}) and (\ref{selfint}). So the following intersection product is $0$ 
\begin{equation}\label{EqRSTCRPBU}
(e_{j}^{\alpha+1}+p_{j\alpha+1}e_{\alpha+1}^{\alpha+1})\cdot e_{\alpha+1}^{\alpha+1}=0,
\end{equation}
and we can conclude that
\begin{equation*} 
e_{i}^{\alpha+1*}\cdot e_{\alpha+1}^{\alpha+1*}=(e_{i}^{\alpha+1}+\sum_{j=i+1}^{\alpha}b_{j,i}e_{j}^{\alpha+1}+(\sum_{j=i}^{\alpha} p_{j\alpha+1}b_{j,i})e_{\alpha+1}^{\alpha+1})\cdot e_{\alpha+1}^{\alpha+1}=0.
\end{equation*}
On the other hand $h^{\alpha+1*}\cdot e_{\alpha+1}^{\alpha+1}=0$ is a consequence of the moving lemma (see \cite[11.4 Moving lemma]{Fulton98}). If we make the pull back through $\pi_{s,\alpha+1}^{*}$ for all $\alpha$, then it follows that $\left\langle \left\{x_{i}\cdot x_{j}\right\}_{\substack{i,j=0 \\ i\neq j}}^{s}\right\rangle\subset ker(\phi)$. By \cite[Example 16.1.11]{Fulton98}, $A_{0}(Z_{0})$ is a birational invariant, that is $A_{0}(Z_{i})\cong\mathbb{Z}(h^{i*})^{n}$ for $i=1,...,s$, so since $(e_{\alpha+1}^{\alpha+1})^{n}=(-1)^{n-1}j_{\alpha+1*}(\varsigma_{\alpha+1}^{n})$ then $(e_{\alpha+1}^{\alpha+1})^{n}=(-1)^{n-1}(h^{\alpha+1*})^{n}$, and by making the pull back through $\pi_{s,\alpha+1}^{*}$ we conclude that $\left\langle \left\{(-1)^{n}(x_{i})^{n}+(x_{0})^{n}\right\}_{i=1}^{s}\right\rangle\subset Ker(\phi)$.\\
Now we will prove that $Ker(\phi)\subset\mathcal{J}$. Note that $\phi: \mathbb{Z}\left[x_{0},x_{1},...,x_{s}\right]\rightarrow A^{\bullet}(Z_{s})$ is homogenous, so $ker(\phi)$ is an homogenous ideal, and $\mathcal{J}$ is an homogenous ideal too by construction. Let us suppose that $P\left[x\right]\in Ker(\phi)/\mathcal{J}$ with $deg(P)=\eta$. Then $2\leq\eta\leq n$, since $\left\{x_{i}^{n+1}\right\}_{i=0}^{s}\in\mathcal{J}$, and $P\left[x\right]$ must be of the form $P\left[x\right]=\sum_{i=0}^{s}a_{i}x_{i}^{\eta}mod(\mathcal{J})$, since $\left\{x_{i}\cdot x_{j}\right\}_{\substack{i,j=0 \\ i\neq j}}^{s}\in\mathcal{J}$. Now if $\eta<n$, then $x_{i}^{n-\eta}P\left[x\right]$ will be also in $Ker(\phi)$, then $\phi(x_{i}^{n-\eta}P\left[x\right])=a_{i}(e_{i}^{s*})^{n}=0$, since $(e_{i}^{s*})^{n}\neq 0$ then $a_{i}=0$ for $i=0,1,...,s$. If $\eta=n$, since $\left\{(-1)^{n}(x_{i})^{n}+(x_{0})^{n}\right\}_{i=1}^{s}\in Ker(\phi)$ then it follows that $a_{0}+(-1)^{n+1}\sum_{i=1}^{s}a_{i}=0$, so $P\left[x\right]=0mod(\mathcal{J})$.
\end{proof}

\begin{rem}\label{RemMinGKer}
Note that $\left\langle x_{0},x_{1},...,x_{s}\right\rangle Ker(\phi)=\left\langle \left\{x_{i}x_{j}x_{k}\right\}_{\substack{i,j,k=0 \\ i\neq j \\ j\neq k}}^{s}, \left\{x_{i}^{n+1}\right\}_{i=0}^{s}\right\rangle$, so we have that \\
$Ker(\phi)/\left\langle x_{0},x_{1},...,x_{s}\right\rangle Ker(\phi)$ is a free $\mathbb{Z}-$module of finite rank $\binom{n+1}{2}+n$. Any set of generators of the ideal $Ker(\phi)$ is a set of generators of $Ker(\phi)/\left\langle x_{0},x_{1},...,x_{s}\right\rangle Ker(\phi)$ as $\mathbb{Z}-$module, so \\
$\left\{\left\{x_{i}\cdot x_{j}\right\}_{\substack{i,j=0 \\ i\neq j}}^{s},\left\{(-1)^{n}(x_{i})^{n}+(x_{0})^{n}\right\}_{i=1}^{s}\right\}$ is a minimal set of generators for $Ker(\phi)$.
\end{rem}

\begin{cor}
Given two sequences of point blow-ups $(Z_{0},...,Z_{s},\pi)$ and $(Z_{0}^{'},...,Z_{s^{'}}^{'},\pi^{'})$, if $s=s^{'}$ then $A^{\bullet}(Z_{s})\cong A^{\bullet}(Z_{s^{'}}^{'})$.
\end{cor}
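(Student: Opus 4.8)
The plan is to read off the result directly from Theorem \ref{ThmCRTTPBU}. The essential point, and the reason the statement is at all surprising, is that the presentation
\begin{equation*}
A^{\bullet}(Z_{s})\cong\mathbb{Z}\left[x_{0},x_{1},\ldots,x_{s}\right]/\bigl(\{x_{i}x_{j}\}_{i\neq j},\{(-1)^{n}x_{i}^{n}+x_{0}^{n}\}_{i=1}^{s}\bigr)
\end{equation*}
depends only on the length $s$ of the sequence and on the common dimension $n$; it is completely insensitive to the proximity data $p_{ij}$, to the choice of the successive centers $P_{i}$, and to any other geometric feature of the particular sequence. The total transforms $e_{i}^{s*}$ were engineered precisely so that their mutual products vanish and their top powers all agree with $(h^{s*})^{n}$ up to the sign $(-1)^{n-1}$, and Theorem \ref{ThmCRTTPBU} asserts that these are the only relations.

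First I would apply Theorem \ref{ThmCRTTPBU} to the sequence $(Z_{0},\ldots,Z_{s},\pi)$, obtaining a graded ring isomorphism $A^{\bullet}(Z_{s})\cong\mathbb{Z}[x_{0},\ldots,x_{s}]/\mathcal{J}$, where $\mathcal{J}$ is the ideal displayed above. Then I would apply the same theorem to the second sequence $(Z_{0}^{'},\ldots,Z_{s^{'}}^{'},\pi^{'})$, obtaining $A^{\bullet}(Z_{s^{'}}^{'})\cong\mathbb{Z}[x_{0},\ldots,x_{s^{'}}]/\mathcal{J}^{'}$ with the analogously defined ideal $\mathcal{J}^{'}$.

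Finally, since by hypothesis $s=s^{'}$ and the two grounds are both isomorphic to $\mathbb{P}^{n}$ for the same $n$ fixed throughout, the two polynomial rings have the same number of variables and the ideals $\mathcal{J}$ and $\mathcal{J}^{'}$ coincide term by term. Thus the right-hand sides of the two presentations are literally the same ring, and composing the first isomorphism with the inverse of the second yields the desired isomorphism $A^{\bullet}(Z_{s})\cong A^{\bullet}(Z_{s^{'}}^{'})$. There is no genuine obstacle to overcome here: the entire content has already been absorbed into Theorem \ref{ThmCRTTPBU}, whose presentation was deliberately stated in a form manifestly independent of the combinatorics of the blow-up sequence, so the corollary is immediate.
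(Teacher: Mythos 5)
Your proposal is correct and is essentially the paper's own argument: the paper also deduces the corollary directly from the presentation in Theorem \ref{ThmCRTTPBU}, which depends only on $s$ and $n$ and not on the proximity data. Your write-up merely makes explicit the (implicit) assumption that both grounds are $\mathbb{P}^{n}$ for the same fixed $n$, which is consistent with the paper's standing conventions.
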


\begin{proof}
It follows directly from equation $(\ref{EqCRTTPBU})$ in Theorem \ref{ThmCRTTPBU}.
\end{proof}

We can use $\left\{\widetilde{h}^{s},\left\{e_{i}^{s}\right\}_{i=1}^{s}\right\}$ as generators of the Chow ring $A^{\bullet}(Z_{s})$ as $\mathbb{Z}-$algebra instead.

\begin{thm}\label{ThmCRPBUST}
A presentation of $A^{\bullet}(Z_{s})$ using $\left\{\widetilde{h}^{s},\left\{e_{i}^{s}\right\}_{i=1}^{s}\right\}$ as generators is the following one:
\begin{equation}\label{EqSTCRPBU}
A^{\bullet}(Z_{s})\cong\frac{\mathbb{Z}\left[y_{0},y_{1},...,y_{s}\right]}{\mathcal{A}},
\end{equation}
where 
\begin{equation*}
\mathcal{A}=((\left\{y_{0}\cdot y_{i}\right\}_{i=1}^{s}, \left\{(y_{i}+\sum_{k=i+1}^{s}b_{k,i}y_{k})\cdot(y_{j}+\sum_{l=j+1}^{s}b_{l,j}y_{l})\right\}_{\substack{i,j=1 \\ i\neq j}}^{s},\left\{(y_{i})^{n}+((-1)^{n}+\#\left\{j\right\}_{j\rightarrow i})(y_{0})^{n}\right\}_{i=1}^{s}))
\end{equation*}
by sending $y_{0}$ to $\widetilde{h}^{s}$ and $y_{i}$ to $e_{i}^{s}$ for $i=1,...,s$.
\end{thm}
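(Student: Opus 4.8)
The plan is to obtain presentation \eqref{EqSTCRPBU} from Theorem \ref{ThmCRTTPBU} by a graded change of variables, exploiting that the strict and total transforms differ by the unipotent, integrally invertible matrix $B_s^{*-1}$. Let $\phi'\colon\mathbb{Z}[y_0,\dots,y_s]\to A^\bullet(Z_s)$ be the surjection of Lemma \ref{LemGCRPBU} with $\phi'(y_0)=\widetilde{h}^{s}$ and $\phi'(y_i)=e_i^{s}$, and let $\phi$ be the map of Theorem \ref{ThmCRTTPBU}. Since $\widetilde{h}^{s}=h^{s*}$ and $e_i^{s*}=e_i^{s}+\sum_{j>i}b_{j,i}e_j^{s}$, I define the graded ring homomorphism $\Psi\colon\mathbb{Z}[x_0,\dots,x_s]\to\mathbb{Z}[y_0,\dots,y_s]$ by $\Psi(x_0)=y_0$ and $\Psi(x_i)=\xi_i:=y_i+\sum_{j>i}b_{j,i}y_j$. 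Then $\phi'\circ\Psi=\phi$, and $\Psi$ is an isomorphism because on degree-one parts it is unipotent upper triangular over $\mathbb{Z}$. Hence $\ker\phi'=\Psi(\ker\phi)=\Psi(\mathcal{J})$, and everything reduces to proving $\Psi(\mathcal{J})=\mathcal{A}$.

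First I would treat the quadratic generators. Applying $\Psi$ to $x_0x_i$ gives $y_0\xi_i$, and because the $b$-recombination is triangular and invertible the family $\{y_0\xi_i\}_{i=1}^{s}$ generates the same ideal as $\{y_0y_i\}_{i=1}^{s}$; applying $\Psi$ to $x_ix_j$ with $1\le i\ne j$ returns exactly the second family $\xi_i\xi_j$ of $\mathcal{A}$. Thus the quadratic part $\mathcal{Q}$ of $\mathcal{A}$ is contained in both $\Psi(\mathcal{J})$ and $\mathcal{A}$, and it suffices to compare the images of the degree-$n$ generators in the quotient $R:=\mathbb{Z}[y]/\mathcal{Q}$. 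In the coordinates $\xi_0=y_0,\xi_1,\dots,\xi_s$ the relations of $\mathcal{Q}$ become $\xi_a\xi_b=0$ for all $a\ne b$, so $R\cong\mathbb{Z}[\xi_0,\dots,\xi_s]/(\xi_a\xi_b)_{a\ne b}$ and each graded piece of degree $\ge1$ is spanned by the pure powers $\xi_a^{d}$.

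The crux is the degree-$n$ comparison, which I would carry out in $R$. Inverting the proximity recursion $e_i^{s*}=e_i^{s}+\sum_{j>i}p_{ij}e_j^{s*}$ gives the formal linear identity $y_i=\xi_i-\sum_{j>i}p_{ij}\xi_j$ (this is precisely $B_s^{*}B_s^{*-1}=\mathrm{Id}$). Since distinct $\xi_a$ multiply to zero in $R$ and $p_{ij}\in\{0,1\}$ forces $p_{ij}^{\,n}=p_{ij}$, expanding the $n$-th power yields $\overline{y_i^{\,n}}=\xi_i^{\,n}+(-1)^{n}\sum_{j>i}p_{ij}\xi_j^{\,n}$ in $R$. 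Writing $V_i:=\xi_i^{\,n}+(-1)^{n}\xi_0^{\,n}$, so that $(-1)^{n}V_i$ is the image of the degree-$n$ generator $\Psi((-1)^{n}x_i^{n}+x_0^{n})$, and using $\#\{j\}_{j\to i}=\sum_{j>i}p_{ij}$, a short regrouping gives the image of the corresponding generator of $\mathcal{A}$ as
\[
\overline{y_i^{\,n}+\bigl((-1)^{n}+\#\{j\}_{j\to i}\bigr)y_0^{\,n}}=V_i+(-1)^{n}\sum_{j>i}p_{ij}V_j .
\]
This is a unipotent triangular relation between the family $\{V_i\}$ and the generators of $\mathcal{A}$, so they span the same submodule of $R_n$ and generate the same ideal; together with $\mathcal{Q}$ this yields $\Psi(\mathcal{J})=\mathcal{A}$, hence $\ker\phi'=\mathcal{A}$.

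As an independent check of the inclusion $\mathcal{A}\subseteq\ker\phi'$ one can evaluate $\phi'$ directly: the same orthogonality $e_a^{s*}\cdot e_b^{s*}=0$ and $(e_a^{s*})^{n}=(-1)^{n-1}(h^{s*})^{n}$ from the proof of Theorem \ref{ThmCRTTPBU} give $(e_i^{s})^{n}=\bigl((-1)^{n-1}-\#\{j\}_{j\to i}\bigr)(h^{s*})^{n}$, which is exactly the vanishing of the third family. I expect the main obstacle to be the degree-$n$ bookkeeping: tracking the sign $(-1)^{n}$ together with the proximity count $\#\{j\}_{j\to i}$ while reducing an $n$-th power modulo the quadratic relations. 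This is precisely the step where the orthogonality of the total transforms does the essential work, collapsing the multinomial expansion to a sum of pure $n$-th powers.
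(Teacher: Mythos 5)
Your argument is correct and rests on the same core idea as the paper's proof: the unipotent, integrally invertible change of variables between the $x$'s (total transforms) and the $y$'s (strict transforms) transports the presentation of Theorem \ref{ThmCRTTPBU}. The difference lies in the direction and in how the second inclusion is closed. The paper works with $\rho(y_i)=x_i-\sum_{j>i}p_{ij}x_j$, expands $\rho$ of each proposed generator of $\mathcal{A}$ by the multinomial theorem, recognizes the cross-terms as lying in the quadratic part of $\mathcal{J}$, and so obtains $\mathcal{A}\subseteq\ker\phi'$; the reverse inclusion is then dispatched by appealing to the minimal-generation count of Remark \ref{RemMinGKer}. You instead use $\Psi=\rho^{-1}$, note that $\ker\phi'=\Psi(\ker\phi)=\Psi(\mathcal{J})$ because $\Psi$ is an isomorphism, and prove the ideal equality $\Psi(\mathcal{J})=\mathcal{A}$ outright by first matching the quadratic parts and then comparing the degree-$n$ generators in $R=\mathbb{Z}[y]/\mathcal{Q}$, where the orthogonality $\xi_a\xi_b=0$ collapses $y_i^{\,n}=(\xi_i-\sum_{j>i}p_{ij}\xi_j)^n$ to pure powers and exhibits the two degree-$n$ families as related by a unipotent triangular integer matrix. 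What your route buys is that both inclusions come at once and the inclusion $\ker\phi'\subseteq\mathcal{A}$ --- which the paper labels ``straightforward'' --- becomes fully explicit, with no appeal to the minimality remark; the cost is nil, since reducing modulo $\mathcal{Q}$ before expanding is exactly what makes the multinomial bookkeeping disappear. Two small points to make explicit when writing this up: the claim that $\{y_0\xi_i\}_{i=1}^{s}$ and $\{y_0y_i\}_{i=1}^{s}$ generate the same ideal is again the unipotent-triangular observation (argue by downward induction on $i$), and the final step should state that since both ideals contain $\mathcal{Q}$ and their degree-$n$ generators span the same $\mathbb{Z}$-submodule of $R_n$ modulo $\mathcal{Q}$, the ideals coincide.
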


\begin{proof}
In this case there exists a surjective morphism
\begin{equation*}
\phi^{'}: \mathbb{Z}\left[y_{0},y_{1},...,y_{s}\right]\rightarrow A^{\bullet}(Z_{s})
\end{equation*}
with $\phi^{'}(y_{0})=h^{s*}$ and $\phi^{'}(y_{i})=e_{i}^{s}$ for $i=1,...,s$. Moreover we have the following commutative diagram
\begin{equation*}
\xymatrix{ \mathbb{Z}\left[x_{0},...,x_{s}\right]\ar[rd]^{\phi} & \\
\mathbb{Z}\left[y_{0},...,y_{s}\right]\ar[u]^{\rho}\ar[r]^{\phi^{'}} & A^{\bullet}(Z_{s})}
\end{equation*}
where $\rho: \mathbb{Z}\left[y_{0},...,y_{s}\right]\rightarrow\mathbb{Z}\left[x_{0},...,x_{s}\right]$ is the isomorphism induced by the augmentated change of basis matrix $B_{s}^{*}$, that is $\rho(y_{0})=x_{0}$ and $\rho(y_{i})=x_{i}-\sum_{j=i+1}^{s}p_{ij}x_{j}$. Now, by considering the following images through $\rho$:

\begin{equation*}
\begin{cases}
\rho((y_{i})^{n}+((-1)^{n}+\#\left\{j\right\}_{j\rightarrow i})(y_{0})^{n}) & =(x_{i}-\sum_{k=i+1}^{s}p_{ik}x_{k})^{n}+((-1)^{n}+\#\left\{j\right\}_{j\rightarrow i})(x_{0})^{n} \\
 & =(x_{i})^{n}+(-1)^{n}\sum_{k=i+1}^{s}p_{ik}(x_{k})^{n}+((-1)^{n}+\#\left\{j\right\}_{j\rightarrow i})(x_{0})^{n}+ \\
 & \sum\limits_{\substack{n_{i}+n_{i+1}+...+n_{s}=n \\ n_{i},...,n_{s}\neq n}}(-1)^{n-n_{i}}\binom{n}{n_{i},n_{i+1}...,n_{s}}\prod_{\beta=i}^{s} (p_{i\beta}x_{\beta})^{n_{\beta}} \\
 & =(-1)^{n}((-1)^{n}(x_{i})^{n}+(x_{0})^{n})+\sum_{k=i+1}^{s}p_{ik}((-1)^{n}(x_{k})^{n}+(x_{0})^{n})+ \\
 & \sum\limits_{\substack{n_{i}+n_{i+1}+...+n_{s}=n \\ n_{i},...,n_{s}\neq n}}(-1)^{n-n_{i}}\binom{n}{n_{i},n_{i+1}...,n_{s}}\prod_{\beta=i}^{s} (p_{i\beta}x_{\beta})^{n_{\beta}} \\
\rho(y_{0}\cdot y_{i}) & =x_{0}\cdot(x_{i}-\sum_{k=i+1}^{s}p_{ik}x_{k}) \\
 & =x_{0}\cdot x_{i}-\sum_{k=i+1}^{s}p_{i,k}x_{0}\cdot x_{k},\\
\rho((y_{i}+\sum_{k=i+1}^{s}b_{k,i}y_{k})\cdot(y_{j}+\sum_{l=j+1}^{s}b_{l,j}y_{l})) & =x_{i}\cdot x_{j}
\end{cases}
\end{equation*}
we can conclude that $\mathcal{A}\subset Ker(\phi^{'})$. \
The inclusion $Ker(\phi^{'})\subset\mathcal{A}$ is straightforward by Remark \ref{RemMinGKer}.
\end{proof}

\begin{defn}\label{DefFD}
We define for a given a sequence of blow-ups $(Z_{0},...,Z_{s},\pi)$ that an irreducible component $E_{i}$ is final if and only if $\nexists j$ such that $P_{j}$ is proximate to $P_{i}$. 
\end{defn}

\begin{rem}
In \cite{CamazonEncinas22} the definition of final divisor is different, but in the case of sequences of point blow-ups both definitions are equivalent.
\end{rem}

Also, in \cite{CamazonEncinas22}, a characterization of final was expressed in terms of the intersection of the irreducible components of the exceptional divisor. We can now use Theorem \ref{ThmCRPBUST} to refine the characterization.

\begin{cor}
$E_{i}$ is final if and only if its class in $A^{1}(Z_{s})$, that is $e_{i}^{s}$, satisfies the following two conditions
\begin{numcases} 
(e_{i}^{s})^{n}=(-1)^{r}(e_{i}^{s})^{n-r}(e_{j}^{s})^{r} \label{cond1} \\
(e_{j}^{s})^{n-1}e_{i}^{s}=(h^{s*})^{n} \label{cond2}
\end{numcases}

for every $j$ such that $e_{i}^{s}\cdot e_{j}^{s}\neq 0$. 
\end{cor}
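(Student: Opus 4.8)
The plan is to transport the whole computation into the total-transform presentation of Theorem \ref{ThmCRTTPBU}, where multiplication is essentially diagonal and the arithmetic is transparent. Working in $A^{\bullet}(Z_{s})$ through that isomorphism, write $x_{a}$ for the generators ($x_{0}=h^{s*}$, $x_{i}=e_{i}^{s*}$) and $\omega=(h^{s*})^{n}$ for the generator of $A_{0}(Z_{s})=A^{n}(Z_{s})$. The relations read $x_{a}x_{b}=0$ for $a\neq b$, $x_{a}^{n}=(-1)^{n+1}\omega$ for $a\geq1$, $x_{0}^{n}=\omega$, and $x_{a}^{m}=0$ for $m>n$. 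The crucial consequence is that every monomial involving two distinct indices vanishes, so any product of classes is computed by retaining only its ``pure'' terms. Using the change of basis from Theorem \ref{ThmCRPBUST}, I first rewrite the strict class as $e_{i}^{s}=x_{i}-\sum_{k\in S_{i}}x_{k}$, where $S_{i}=\{k>i:P_{k}\rightarrow P_{i}\}$; by Definition \ref{DefFD}, $E_{i}$ is final precisely when $S_{i}=\varnothing$.

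Next I would record the product formulas needed, all as integer multiples of $\omega$. From the pure-term principle, $e_{i}^{a}=x_{i}^{a}+(-1)^{a}\sum_{k\in S_{i}}x_{k}^{a}$ for $a\geq1$, and hence for $a,b\geq1$ with $a+b=n$,
\[
(e_{i}^{s})^{a}(e_{j}^{s})^{b}=(-1)^{n+1}\Big(\sum_{c}\varepsilon_{i}(c)^{a}\,\varepsilon_{j}(c)^{b}\Big)\,\omega,
\]
where $\varepsilon_{i}(c)$ is the coefficient of $x_{c}$ in $e_{i}^{s}$ (so $\varepsilon_{i}(i)=1$, $\varepsilon_{i}(k)=-1$ for $k\in S_{i}$, and $0$ otherwise) and $c$ ranges over $(\{i\}\cup S_{i})\cap(\{j\}\cup S_{j})$; the pure power $(e_{i}^{s})^{n}=((-1)^{n+1}-\#S_{i})\,\omega$ may be read off directly from the last relation of Theorem \ref{ThmCRPBUST}. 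Membership of $i$ and $j$ in this common index set is governed by proximity: $j$ contributes exactly when $P_{j}\rightarrow P_{i}$, $i$ contributes exactly when $P_{i}\rightarrow P_{j}$, and the remaining contributions come from the shared proximate points $S_{i}\cap S_{j}$. Carrying out this bookkeeping gives, for each relevant $j$, explicit coefficients of $(e_{j}^{s})^{n-1}e_{i}^{s}$ and of $(e_{i}^{s})^{n-r}(e_{j}^{s})^{r}$ in terms of $\#S_{i}$, $\#(S_{i}\cap S_{j})$ and the parity of $n$.

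With these formulas the equivalence is a finite check. For the forward implication, if $E_{i}$ is final then $S_{i}=\varnothing$, so every $j$ with $e_{i}^{s}\cdot e_{j}^{s}\neq0$ must satisfy $P_{i}\rightarrow P_{j}$ and $S_{i}\cap S_{j}=\varnothing$; substituting into the formulas gives $(e_{i}^{s})^{n}=(-1)^{n+1}\omega=(-1)^{r}(e_{i}^{s})^{n-r}(e_{j}^{s})^{r}$ and $(e_{j}^{s})^{n-1}e_{i}^{s}=\omega$, which are exactly \eqref{cond1} and \eqref{cond2}. For the converse I argue by contraposition: if $E_{i}$ is not final, choose $j\in S_{i}$ (so $P_{j}\rightarrow P_{i}$ and $e_{i}^{s}\cdot e_{j}^{s}\neq0$). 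The formula then yields $(e_{j}^{s})^{n-1}e_{i}^{s}=((-1)^{n}-\#(S_{i}\cap S_{j}))\,\omega$, which already violates \eqref{cond2} when $n$ is odd or when $S_{i}\cap S_{j}\neq\varnothing$; in the one remaining case ($n$ even, $S_{i}\cap S_{j}=\varnothing$) the computation of \eqref{cond1} reduces to $-1-\#S_{i}=-1$, impossible since $\#S_{i}\geq1$. Thus some condition fails, establishing the contrapositive.

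The main obstacle is the sign and index bookkeeping in the general product formula, compounded by the fact that $\{j:e_{i}^{s}\cdot e_{j}^{s}\neq0\}$ is genuinely larger than the set of points in direct proximity with $P_{i}$: two components can meet through a common proximate point even when neither is proximate to the other, so $S_{i}\cap S_{j}$ must be tracked throughout. The saving feature is that all such extra neighbours vanish once $S_{i}=\varnothing$, so they never interfere with the forward implication, and for the converse it suffices to exhibit the single bad neighbour $j\in S_{i}$. A secondary point to pin down is the intended range of $r$ in \eqref{cond1}; since the right-hand side turns out to be independent of $r$ in every relevant case, one value of $r$ suffices and the statement holds for all admissible $r$.
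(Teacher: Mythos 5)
Your reduction to the total-transform presentation of Theorem \ref{ThmCRTTPBU} is a legitimate and in fact cleaner route than the paper's (which works with the recursive intersection relations and a truncation argument ``$E_j^j$ is final for $(Z_0,\dots,Z_j)$''), and your forward implication is correct. But the contrapositive direction has a genuine gap: you ``choose $j\in S_i$ (so $P_j\rightarrow P_i$ and $e_i^s\cdot e_j^s\neq 0$)'', and the parenthetical claim is false in general. By your own pure-term principle, for $j\in S_i$ one has $e_i^s\cdot e_j^s=-x_j^2+\sum_{c\in S_i\cap S_j}x_c^2$. For $n\geq 3$ the squares $x_c^2$ are linearly independent in $A^2(Z_s)$ and this is indeed nonzero; but for $n=2$ every $x_c^2$ with $c\geq 1$ equals $-(h^{s*})^2$, so $e_i^s\cdot e_j^s=(1-\#(S_i\cap S_j))(h^{s*})^2$, which vanishes precisely when some later point is proximate to both $P_i$ and $P_j$ (the classical satellite situation on surfaces). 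In that case your chosen $j$ is excluded from the quantifier ``for every $j$ with $e_i^s\cdot e_j^s\neq 0$'', and showing that \eqref{cond2} fails for it proves nothing. This is exactly the subcase ($n$ even, $S_i\cap S_j\neq\varnothing$) that your case analysis routes through \eqref{cond2}, so the argument as written does not close for $n=2$.

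The repair is the choice the paper makes and that your framework absorbs painlessly: among all $\beta$ with $P_\beta\rightarrow P_i$, take $j$ maximal (or, as in the paper, any $j$ with $P_j\rightarrow P_i$ such that no $k$ satisfies both $P_k\rightarrow P_i$ and $P_k\rightarrow P_j$). Then $S_i\cap S_j=\varnothing$, so $e_i^s\cdot e_j^s=-x_j^2\neq 0$ in every dimension $n\geq 2$, and your formulas give $(e_j^s)^{n-1}e_i^s=(-1)^n(h^{s*})^n$ and $(-1)^r(e_i^s)^{n-r}(e_j^s)^r=-(h^{s*})^n$; hence \eqref{cond2} fails when $n$ is odd, and \eqref{cond1} fails when $n$ is even because $(e_i^s)^n=(-1-\#S_i)(h^{s*})^n$ with $\#S_i\geq 1$. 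With that single modification your proof is complete and matches the paper's conclusion while avoiding its reliance on the intermediate varieties $Z_j$.
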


\begin{proof}
If $E_{i}$ is final then $\nexists k$ such that $P_{k}$ is proximate to $P_{i}$. By equation (\ref{EqRSTCRPBU}) $(e_{j}^{i}+e_{i}^{i})\cdot e_{i}^{i}=0$ if $P_{i}$ is proximate to $P_{j}$ and $e_{i}^{i}\cdot e_{j}^{i}=0$ otherwise. Since $E_{i}$ is final then it follows that

\begin{numcases} 
(e_{j}^{s}+e_{i}^{s})\cdot e_{i}^{s}=0 & if $P_{i}\rightarrow P_{j}$ \label{numfinprox} \\
e_{i}^{s}\cdot e_{j}^{s}=0 & otherwise
\end{numcases}

From equation (\ref{numfinprox}) we can deduce that $(e_{i}^{s})^{n}=(-1)^{r}(e_{i}^{s})^{n-r}(e_{j}^{s})^{r}$. Moreover $(h^{s*})^{n}=(-1)^{n+1}(e_{i}^{s*})^{n}$, so $(h^{s*})^{n}=(-1)^{2n}e_{i}^{s}(e_{j}^{s})^{n-1}=e_{i}^{s}(e_{j}^{s})^{n-1}$. \\
Now we will prove that if $E_{i}$ is not final, then some of the above conditions fails. Among all the index $\left\{\beta\right\}$ satisfying $P_{\beta}\rightarrow P_{i}$ there must exist an index $j$ such that $P_{j}\rightarrow P_{i}$ but that there not exists $k$ with $P_{k}\rightarrow P_{i}$ and $P_{k}\rightarrow P_{j}$. Since $E_{j}^{j}$ is final for the sequence $(Z_{0},...,Z_{j},\pi_{j,0})$, then $(e_{j}^{j})\cdot(e_{i}^{j})^{n-1}=(h^{j*})^{n}$ and $(e_{i}^{j})^{n-1-\beta}(e_{j}^{j})^{1+\beta}=(-1)^{\beta}(e_{i}^{j})^{n-1}e_{j}^{j}$. Moreover, since $\nexists P_{k}$ with $P_{k}$ proximate to both $P_{i}$ and $P_{j}$,  then we can conclude that $(e_{j}^{s})\cdot(e_{i}^{s})^{n-1}=(h^{s*})^{n}$ and $(e_{i}^{s})^{n-1-\beta}(e_{j}^{s})^{1+\beta}=(-1)^{\beta}(e_{i}^{s})^{n-1}e_{j}^{s}$.
If $n$ is even, although $(e_{j}^{s})^{n-1}e_{i}^{s}=(h^{s*})^{n}$ since $n-2$ is even too, $(e_{i}^{s})^{n}\neq(-1)^{n-1}(e_{i}^{s})(e_{j}^{s})^{n-1}$ since by Theorem \ref{ThmCRPBUST} $(e_{i}^{s})^{n}=-(1+\#\left\{\beta\right\})(h^{s*})^{n}$ with $\#\left\{\beta\right\}\geq 1$ so condition (\ref{cond1}) fails. \\
If $n$ is odd, $(e_{j}^{s})^{n-1}e_{i}^{s}=-(h^{j*})^{n}$, since $n-2$ is odd too, so condition \ref{cond2} fails.
\end{proof}

\textbf{Some comments about the Chow ring of blow-ups at more general centers}

The main result in the literature about the structure of the Chow ring of a blow-up at a center of arbitrary dimension is the following one
\begin{thm}\cite[Appendix Theorem 1.]{Keel92}\label{ThmCRBUKeel}
Suppose the map of bivariant rings
\begin{equation*}
i_{\alpha+1}^{*}: A^{\bullet}(Z_{\alpha})\rightarrow A^{\bullet}(C_{\alpha+1})
\end{equation*}
is surjective, then $A^{\bullet}(Z_{\alpha+1})$ is isomorphic to
\begin{equation*}
A^{\bullet}(Z_{\alpha})\left[T\right]/(P(T),(T\cdot Ker(i_{\alpha+1}^{*}))),
\end{equation*}
where $P(T)\in A^{\bullet}(Z_{\alpha})\left[T\right]$ is any polynomial whose constant term is $\left[C_{\alpha+1}\right]$ and whose restriction to $A^{\bullet}(C_{\alpha+1})$is the Chern polynomial of the normal bundle $\mathcal{N}_{C_{\alpha+1}/Z_{\alpha}}$ i.e.
\begin{equation*}
i_{\alpha+1}^{*}(P(T))=t^{d}+c_{1}(\mathcal{N}_{C_{\alpha+1}/Z_{\alpha}})T^{d-1}+\cdots+c_{d-1}(\mathcal{N}_{C_{\alpha+1}/Z_{\alpha}})T+c_{d}(\mathcal{N}_{C_{\alpha+1}/Z_{\alpha}}),
\end{equation*}
(where $d=codim(C_{\alpha+1},Z_{\alpha})$). This isomorphism is induced by
\begin{equation*}
\pi_{\alpha+1}^{*}: A^{\bullet}(Z_{\alpha})\rightarrow A^{\bullet}(Z_{\alpha+1})
\end{equation*}
and by sending $-T$ to the class of the exceptional divisor.
\end{thm}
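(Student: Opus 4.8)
The plan is to exhibit the claimed isomorphism as a map induced by $\pi_{\alpha+1}^{*}$ on $A^{\bullet}(Z_{\alpha})$ together with the assignment $T\mapsto -e$, where I abbreviate $E=E_{\alpha+1}^{\alpha+1}$, $e=[E]\in A^{1}(Z_{\alpha+1})$, $\pi=\pi_{\alpha+1}$, $j=j_{\alpha+1}$, $g=g_{\alpha+1}$, $i=i_{\alpha+1}$, $\mathcal{N}=\mathcal{N}_{C_{\alpha+1}/Z_{\alpha}}$ and $d=\mathrm{codim}(C_{\alpha+1},Z_{\alpha})$. Before anything else I would record the two inputs used repeatedly. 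First, the self-intersection identity $j^{*}e=-\varsigma$ (equivalently $\mathcal{O}(E)|_{E}=\mathcal{O}_{E}(-1)$); combined with the multiplication rule (\ref{inted}) it yields, by a short induction on $k$, that $(-e)^{k}=-j_{*}(\varsigma^{k-1})$ for every $k\geq 1$. Second, Fulton's blow-up key formula $\pi^{*}i_{*}(\xi)=j_{*}\bigl(c_{d-1}(\mathcal{Q})\cdot g^{*}\xi\bigr)$ (see \cite[Theorem 6.7]{Fulton98}), where $\mathcal{Q}$ is the universal rank-$(d-1)$ bundle on $E=\mathbb{P}(\mathcal{N})$ fitting in the tautological sequence $0\to\mathcal{O}_{E}(-1)\to g^{*}\mathcal{N}\to\mathcal{Q}\to 0$, so that $c_{d-1}(\mathcal{Q})=\sum_{m=0}^{d-1}g^{*}c_{d-1-m}(\mathcal{N})\,\varsigma^{m}$.

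First I would define $\widetilde{\Psi}\colon A^{\bullet}(Z_{\alpha})[T]\to A^{\bullet}(Z_{\alpha+1})$ by $\widetilde{\Psi}|_{A^{\bullet}(Z_{\alpha})}=\pi^{*}$ and $\widetilde{\Psi}(T)=-e$, and check that both families of generators of the ideal lie in its kernel, so that $\widetilde{\Psi}$ descends to a ring homomorphism $\Psi$ on the quotient. For $\eta\in\mathrm{Ker}(i^{*})$, rule (\ref{intpbed}) with $t=1$ gives $\pi^{*}\eta\cdot e=j_{*}(g^{*}i^{*}\eta)=0$, hence $T\cdot\mathrm{Ker}(i^{*})\subset\mathrm{Ker}(\widetilde{\Psi})$. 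Writing $P(T)=T^{d}+a_{1}T^{d-1}+\dots+a_{d}$ with $i^{*}(a_{m})=c_{m}(\mathcal{N})$ and $a_{d}=[C_{\alpha+1}]$, rule (\ref{intpbed}) and the first input give $\pi^{*}a_{d-k}\cdot(-e)^{k}=-j_{*}\bigl(\varsigma^{k-1}\,g^{*}c_{d-k}(\mathcal{N})\bigr)$ for $1\leq k\leq d$; summing over $k$ and recognizing $\sum_{m=0}^{d-1}g^{*}c_{d-1-m}(\mathcal{N})\varsigma^{m}=c_{d-1}(\mathcal{Q})$ yields $\widetilde{\Psi}(P(T))=\pi^{*}[C_{\alpha+1}]-j_{*}(c_{d-1}(\mathcal{Q}))$, which vanishes by the key formula applied to $\xi=1$. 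Hence $P(T)\in\mathrm{Ker}(\widetilde{\Psi})$ as well.

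Next I would prove surjectivity of $\Psi$ and then injectivity. Surjectivity is immediate from Proposition \ref{ProGCRBU}: $A^{\bullet}(Z_{\alpha+1})$ is generated by $\pi^{*}A^{\bullet}(Z_{\alpha})$ and $j_{*}A^{\bullet}(E)$; writing a class of $A^{\bullet}(E)=\bigoplus_{k=0}^{d-1}\varsigma^{k}g^{*}A^{\bullet}(C_{\alpha+1})$ as $\sum \varsigma^{k}g^{*}i^{*}(b_{k})$, which is possible since $i^{*}$ is surjective, and then using $\varsigma=-j^{*}e$ together with the projection formula, expresses each $j_{*}(\cdot)$ as a polynomial in $e$ with coefficients in $\pi^{*}A^{\bullet}(Z_{\alpha})$. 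For injectivity I would compare additive structures. On the source, reduction modulo the monic polynomial $P(T)$ makes $R:=A^{\bullet}(Z_{\alpha})[T]/(P(T),T\cdot\mathrm{Ker}(i^{*}))$ a free $A^{\bullet}(Z_{\alpha})$-module on $1,T,\dots,T^{d-1}$; the relations $T^{k}\cdot\mathrm{Ker}(i^{*})=0$ for $k\geq 1$, together with surjectivity of $i^{*}$, collapse the coefficient of each $T^{k}$ with $1\leq k\leq d-1$ to $A^{\bullet}(Z_{\alpha})/\mathrm{Ker}(i^{*})\cong A^{\bullet}(C_{\alpha+1})$, giving a group decomposition $R\cong A^{\bullet}(Z_{\alpha})\oplus\bigoplus_{k=1}^{d-1}A^{\bullet}(C_{\alpha+1})\cdot T^{k}$. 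On the target, Fulton's additive blow-up formula \cite[Theorem 6.7]{Fulton98} yields precisely $A^{\bullet}(Z_{\alpha+1})\cong\pi^{*}A^{\bullet}(Z_{\alpha})\oplus\bigoplus_{k=1}^{d-1}j_{*}\bigl(\varsigma^{k-1}g^{*}A^{\bullet}(C_{\alpha+1})\bigr)$. Since $\Psi$ is $A^{\bullet}(Z_{\alpha})$-linear, fixes $1$, and sends the degree-$k$ coefficient $b\mapsto -j_{*}(\varsigma^{k-1}g^{*}i^{*}b)$, it matches these decompositions summand by summand and restricts to an isomorphism on each summand; hence $\Psi$ is bijective.

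The step I expect to be the main obstacle is the injectivity bookkeeping: making rigorous that the quotient by $(P(T),T\cdot\mathrm{Ker}(i^{*}))$ has exactly the stated torsion-free additive form — this is precisely where the hypothesis that $i^{*}$ is surjective is indispensable, through $A^{\bullet}(Z_{\alpha})/\mathrm{Ker}(i^{*})\cong A^{\bullet}(C_{\alpha+1})$ — and that its summands are in bijective, degree-preserving correspondence with those of Fulton's additive decomposition, so that the already-established surjection $\Psi$ is forced to be injective as well. The verification of $\widetilde{\Psi}(P(T))=0$ is the most computational point, but it reduces cleanly to the self-intersection identity and the key formula recorded at the outset.
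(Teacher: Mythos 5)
The paper does not prove this statement at all: it is quoted as background from Keel (\cite[Appendix Theorem 1]{Keel92}) and used as a black box in the closing discussion, so there is no internal proof to compare yours against. Judged on its own, your architecture is the standard one (essentially Keel's, reducing everything to Fulton's Chapter~6 results): define $\widetilde{\Psi}$ by $\pi_{\alpha+1}^{*}$ and $T\mapsto -e$ where $e=[E_{\alpha+1}^{\alpha+1}]$, kill the ideal using the rules of Proposition \ref{ProGCRBU}, and compare additive decompositions for injectivity. Your two kernel computations are correct: rule (\ref{intpbed}) with $t=1$ disposes of $T\cdot\mathrm{Ker}(i_{\alpha+1}^{*})$, and your identity $(-e)^{k}=-j_{\alpha+1*}(\varsigma^{k-1})$ combined with the key formula $\pi_{\alpha+1}^{*}i_{\alpha+1*}(\xi)=j_{\alpha+1*}\bigl(c_{d-1}(\mathcal{Q})\cdot g_{\alpha+1}^{*}\xi\bigr)$ does give $\widetilde{\Psi}(P(T))=\pi_{\alpha+1}^{*}[C_{\alpha+1}]-j_{\alpha+1*}(c_{d-1}(\mathcal{Q}))=0$. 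The surjectivity argument is also complete.

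The injectivity step, which you yourself flag as the main obstacle, is where a real (though fillable) hole remains. You assert that the relations $T^{k}\cdot\mathrm{Ker}(i_{\alpha+1}^{*})=0$ collapse only the coefficients of $T^{1},\dots,T^{d-1}$, leaving the $T^{0}$ summand equal to $A^{\bullet}(Z_{\alpha})$. But the two ideals interact: after reducing modulo the monic $P(T)$, the element $T^{d}\cdot\eta$ with $\eta\in\mathrm{Ker}(i_{\alpha+1}^{*})$ becomes $-(a_{1}\eta T^{d-1}+\cdots+a_{d-1}\eta T+[C_{\alpha+1}]\cdot\eta)$, whose constant term $[C_{\alpha+1}]\cdot\eta$ a priori forces a further quotient of the $T^{0}$ summand by $[C_{\alpha+1}]\cdot\mathrm{Ker}(i_{\alpha+1}^{*})$, and then the summand-by-summand matching with Fulton's decomposition would not close. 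The missing one-line fix is the projection formula: $[C_{\alpha+1}]\cdot\eta=i_{\alpha+1*}(1)\cdot\eta=i_{\alpha+1*}(i_{\alpha+1}^{*}\eta)=0$, so no extra collapsing occurs and your claimed group decomposition of the source is in fact correct. A second, smaller point: for ``restricts to an isomorphism on each summand'' you need Fulton's blow-up formula in its strong (external) form, namely that $(z,c_{1},\dots,c_{d-1})\mapsto \pi_{\alpha+1}^{*}z+\sum_{k}j_{\alpha+1*}(\varsigma^{k-1}g_{\alpha+1}^{*}c_{k})$ is an isomorphism from $A^{\bullet}(Z_{\alpha})\oplus\bigoplus_{k=1}^{d-1}A^{\bullet}(C_{\alpha+1})$; the internal sum of images alone does not give you injectivity of $c\mapsto j_{\alpha+1*}(\varsigma^{k-1}g_{\alpha+1}^{*}c)$, which is what the diagonal comparison uses. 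With these two additions your proof is complete and agrees with the argument in Keel's appendix.
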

However, the surjectivity hypothesis of the theorem is quite restrictive. For example, let us consider the blow-up of a rational curve $C_{1}\subset Z_{0}\cong\mathbb{P}^{3}$ of degree $\gamma>1$.
 
\begin{equation*}
\xymatrix{
E_{1}^{1}\ar[r]^{j_{1}}\ar[d]_{g_{1}} & Z_{1}\ar[d]_{\pi_{1}}\\
C_{1}\ar[r]^{i_{1}} & Z_{0}
}
\end{equation*}

Note that in this case the restriction map $i_{1}^{*}: A^{\bullet}(Z_{0})\rightarrow A^{\bullet}(C_{1})$ is not surjective since $i_{1}^{*}(h)=\gamma\left[P\right]$, where $\left[P\right]$ denotes the class of a point $P\in C_{1}$, so we can not apply Keel formula of  theorem \ref{ThmCRBUKeel}. $A^{\bullet}(Z_{1})$ is no longer generated by $\left\{h^{1*}, e_{1}^{1*}\right\}$ and we need to add an extra generator $r_{1}=j_{1*}\left[g_{1}^{-1}(P)\right]$, which geometrically is the fiber of a point in the blow-up. \\
One can prove that 
\begin{equation*}
A^{\bullet}(Z_{1})\cong\mathbb{Z}\left[x_{0},x_{1},w_{1}\right]/\mathcal{I}
\end{equation*}
where 
\begin{equation*}
\mathcal{I}=((x_{0})^{2}\cdot x_{1}, x_{0}\cdot x_{1}-\gamma w_{1}, (x_{1})^{2}-c_{1}(N)w_{1}+\gamma(x_{0})^{2}, x_{0}\cdot w_{1}, (x_{0})^{3}+x_{1}\cdot w_{1})
\end{equation*}
by sending $x_{0}, x_{1}$ and $w_{1}$ to $h^{1*}, e_{1}^{1*}$ and $r_{1}$ respectively.

\bibliographystyle{plain}
\bibliography{biblioUVA}

\end{document}